\newtheorem{Teo}{Theorem}[section]
\newtheorem{Def}[Teo]{Definition}
\newtheorem{Prop}[Teo]{Proposition}
\newtheorem{Lema}[Teo]{Lemma}
\newtheorem{Cor}[Teo]{Corollary}
\begin{document}

\title{\textsc{Strongly quasipositive links with braid index 3 have positive Conway polynomial}}

\author{Marithania Silvero \footnote{Partially supported by MTM2010-19355, P09-FQM-5112 and FEDER.}\\
Departamento de Álgebra.
Facultad de Matemáticas. \\
Universidad de Sevilla.
Spain.\\
{\tt marithania@us.es}\\ \\
}

\maketitle


\noindent \textbf{Abstract} \,

Strongly quasipositive links are those links which can be seen as closures of positive braids in terms of band generators. In this paper we give a necessary condition for a link with braid index 3 to be strongly quasipositive, by proving that in that case it has positive Conway polynomial (that is, all its coefficients are non-negative). We also show that this result cannot be extended to a higher number of strands, as we provide a strongly quasipositive braid whose closure has non-positive Conway polynomial.

\bigskip

\noindent \textbf{Keywords:} Braids. Conway polynomial. Positive links. Strongly quasipositive links.
\vspace{0.2cm}
\mbox{ }


\section{Introduction}

The notion of positivity related to a link has been deeply studied from many different points of view. Maybe the simplest class involving this concept is the family of positive links. An oriented link is said to be positive if it has a positive diagram, that is, a diagram with all crossings being positive (see Figure \ref{posArtinBKL}).

There are also analogous notions of positivity related to braids; they depend on the choice of the presentation of the braid group. Roughly speaking, a braid is positive if there exists a positive word representing it, that is a word with all its letters having positive exponent.

The braid group on $n$ strands, $\mathbb{B}_n$, has a standard presentation due to Artin (\cite{Artin1}, \cite{Artin2}):

$$
\mathbb{B}_n = \left< \sigma_1, \sigma_2, ... , \sigma_{n-1} \left| \begin{array}{cccc}
                                                            \sigma_i\sigma_j\sigma_i = \sigma_j\sigma_i\sigma_j & & &  |i-j| = 1 \\
                                                            \sigma_i\sigma_j = \sigma_j\sigma_i & & &  |i-j| > 1
                                                            \end{array}
                                                    \right.
\right>
$$

Attending to the presentation above, a braid is said to be Artin-positive if it can be represented by a positive standard braid word, that is, a braid word where each Artin generator $\sigma_i$ appears with positive exponent. The closure of an Artin-positive braid is an Artin-positive link.

Artin-positive links are, indeed, positive links. However, the converse is not true. Although many proofs of this fact can be given, in Section 6 we present one that could be interesting for the reader.

\begin{figure}
\centering
\includegraphics[width = 12cm]{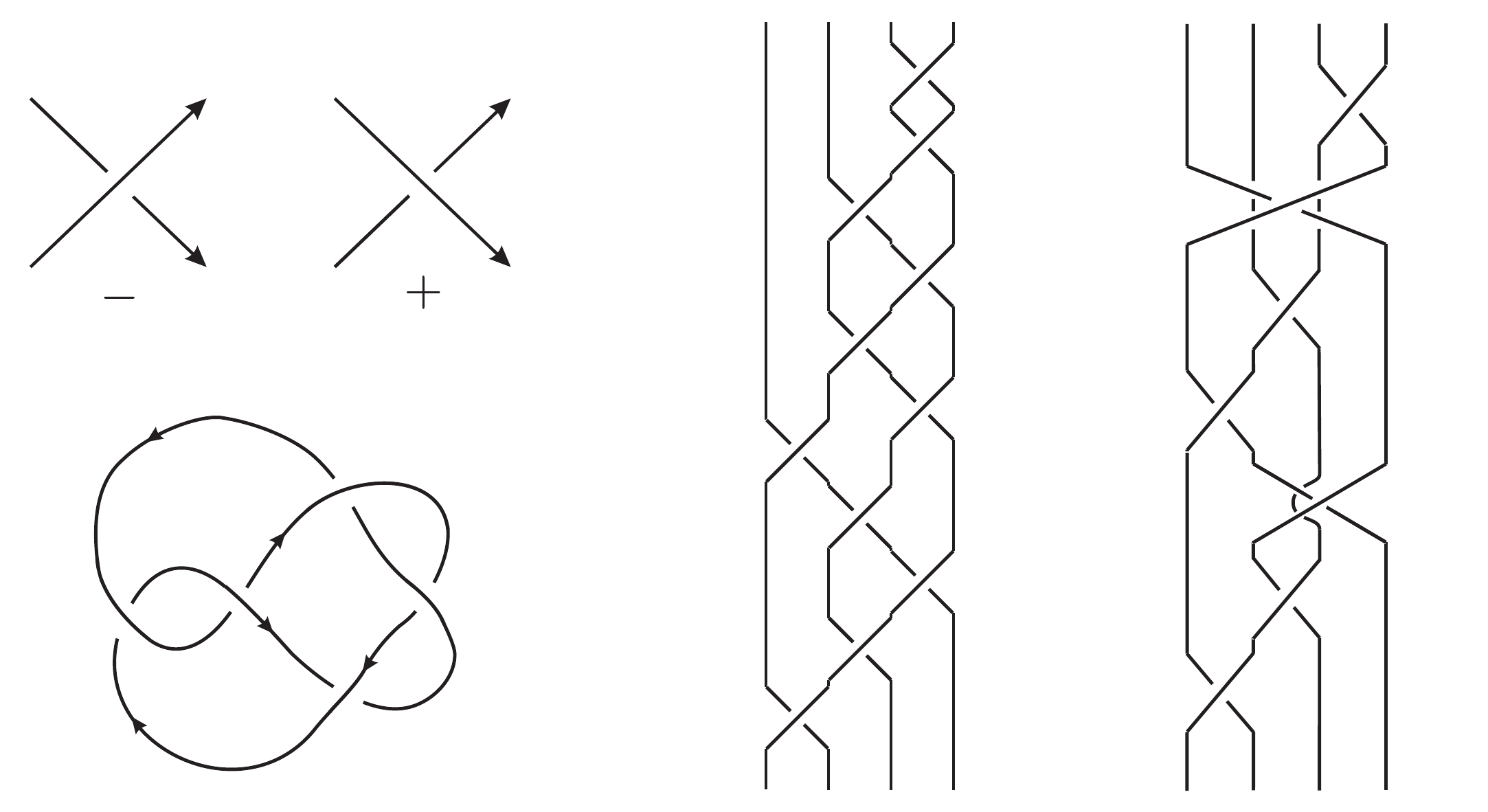}
\caption{\small{In the leftmost part of the figure, you can see the chosen convention of signs and a positive diagram representing the positive knot $5_2$. The Artin-positive braid $\sigma_3\sigma_3\sigma_2\sigma_3\sigma_2\sigma_3\sigma_1\sigma_2\sigma_3\sigma_2\sigma_1$ and the BKL-positive braid $\sigma_{34}\sigma_{14}\sigma_{23}\sigma_{12}\sigma_{24}\sigma_{23}\sigma_{12}$ are also shown.}}
\label{posArtinBKL}
\end{figure}

The braid group $\mathbb{B}_n$ admits another well known presentation due to Birman, Ko and Lee \cite{BirmanKoLee}. The so called BKL generators or \emph{band generators}, $\sigma_{ij}$, are related to Artin-generators by the formula $\sigma_{ij}= (\sigma_{j-2} \ldots \sigma_i)^{-1} \sigma_{j-1} (\sigma_{j-2} \ldots \sigma_{i})$, with $i < j$. They correspond to a positive crossing of strands in positions $i$ and $j$ passing in front of the other strands, as shown in Figure \ref{posArtinBKL}.

$$
\mathbb{B}_n = \left< \sigma_{rs}, \, 1 \leq r < s \leq n \left| \begin{array}{cccc}
                                                            \sigma_{st}\sigma_{qr} = \sigma_{qr}\sigma_{st} & & (t-r)(t-q)(s-r)(s-q) > 0 \\
                                                            \sigma_{st}\sigma_{rs} = \sigma_{rt}\sigma_{st} = \sigma_{rs}\sigma_{rt} & & 1 \leq r < s < t \leq n
                                                            \end{array}
                                                    \right.
\right>
$$

Just as before, a BKL-word having only positive exponents is a BKL-positive word. A braid is BKL-positive if there exists a BKL-positive word representing it, and the closure of a BKL-positive braid is said to be a BKL-positive link. Being more precise:

\begin{Def}
A braid is BKL-positive if it can be expressed by a word written in terms of the generators given by Birman, Ko and Lee, with all letters having positive exponents. The closure of a BKL-positive braid is a BKL-positive link.
\end{Def}

In \cite{RudolphStronglyquasipos} Rudolph introduced these links as the boundaries of what he called \emph{quasipositive surfaces}. A quasipositive surface is basically an orientable surface consisting in a finite number of parallel discs joined by some bands twisted in a positive way (see Figure \ref{quasipossurf}). Each of these bands corresponds to a BKL-generator.

\begin{figure}
\centering
\includegraphics[width = 7.5cm]{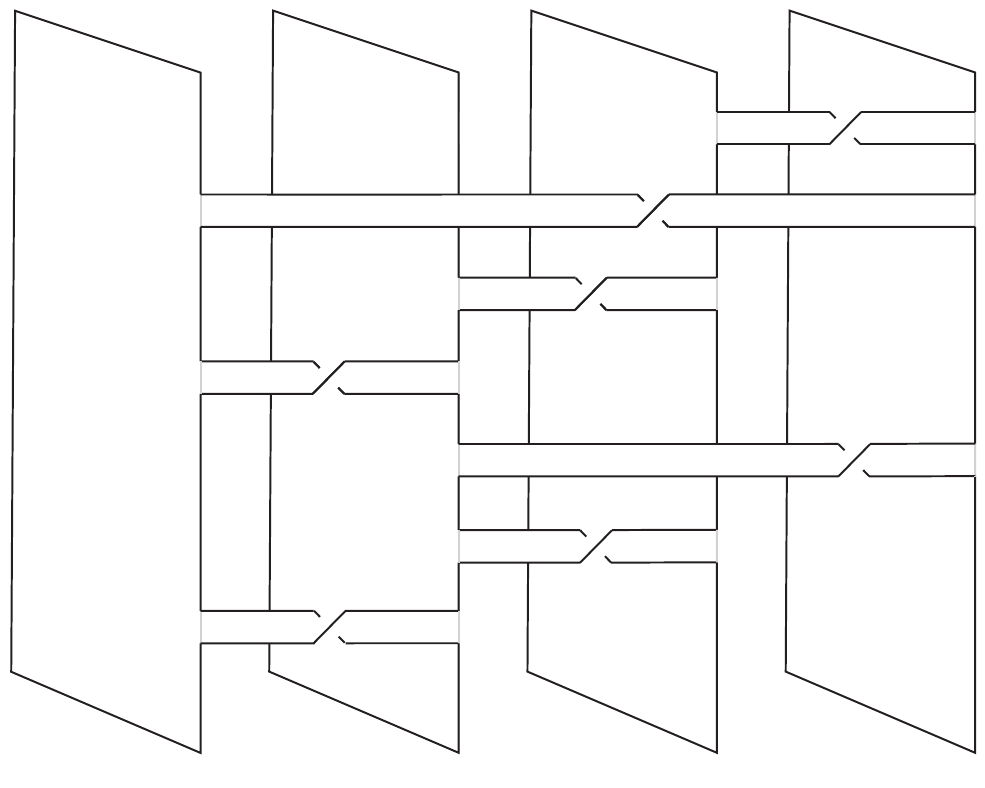}
\caption{\small{The quasipositive surface associated to the closed BKL-positive braid in Figure 1. Its boundary is a strongly quasipositive link (or BKL-positive link).}}
\label{quasipossurf}
\end{figure}

Hence the families of strongly quasipositive links and BKL-positive links are equal. The first name reminds that the link is boundary of a nice kind of surface; the second one brings to our mind the algebraic presentation of the braid group.

In \cite{Rudolph2} Rudolph proved that positive links are strongly quasipositive, which is not obvious. The converse is not true. In fact, Baader \cite{Baader} showed that a link is positive if and only if it is strongly quasipositive and homogeneous; in particular, the link $L9n18\{1\}$ is strongly quasipositive but not positive \cite{MarithaConj}.

The Alexander-Conway polynomial (or just Conway polynomial) was re-discovered in 1969 by J. Conway, as a normalized version of the Alexander polynomial. Precisely, we will refer to the polynomial $\nabla(L) \in \mathbb{Z}[z]$ given by the skein relation $\nabla(L_+) - \nabla(L_-) = z \nabla(L_0)$ with normalization $\nabla(\bigcirc) = 1$, where $L_+, L_-$ and $L_0$ have diagrams that only differ in a small neighborhood, as shown in Figure \ref{D+-0}.

\begin{figure}[H]
\centering
\includegraphics[width = 6cm]{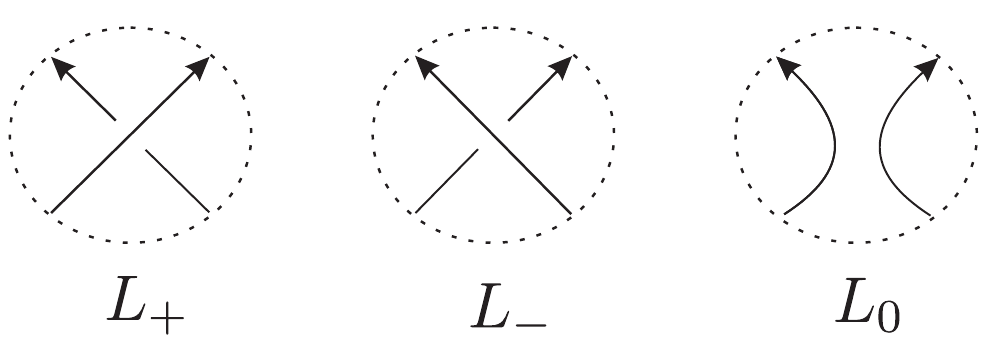}
\caption{\small{Diagrams of links $L_+$, $L_-$ and $L_0$ differ just in a small neighborhood, as shown.}}
\label{D+-0}
\end{figure}

In 1983 Van Buskirk \cite{VanBuskirk} proved that Artin positive links have positive Conway polynomial, that is, all its coefficients are non-negative. Six years later Cromwell \cite{CromwellHom} extended this result to the class of positive links. Recall that Artin-positive links are positive, and these ones are strongly-quasipositive. In this paper we extend their result by proving the following:

\vspace{0.8em}
\noindent \textbf{Theorem \ref{main}}
\emph{Strongly quasipositive links with braid index 3 have positive Conway polynomial.}
\vspace{0.1em}

The plan of the paper is as follows: In Section 2 we recall the definition of the Alexander polynomial in terms of the Burau representation of the braid group. In Section 3 we give explicit formulas for the difference of the Conway polynomial of two links whose associated 3-strands braids differ in an even power of the Garside element; this result leads to a couple of corollaries which are useful in Section 4, which is devoted to prove our main result. The goal of Section 5 is to show that Theorem \ref{main} cannot be generalized to strongly quasipositive links with arbitrary braid index. Finally, in Section 6 we complete some fragments of this paper by giving examples of a link being positive but not Artin-positive, and a link being strongly quasipositive but not positive.

\vspace{0.1cm}

\noindent \textbf{Acknowledgements} \,
I want to thank Pedro M. G. Manchón and Juan González-Meneses for their numerous valuable comments, their suggestions and corrections on preliminary versions of this paper. I am also grateful to Józef H. Przytycki and Maciej Borodzik for fruitful discussions on this problem. \\


\section{Conway polynomial from Burau representation}

Burau \cite{Burau} introduced a linear representation of $\mathbb{B}_n$ by squared matrices of order $n$ over the ring of Laurent polynomials $\mathbb{Z}[t, t^{-1}]$. This representation have been widely studied, being its faithfulness one of the remaining open problems (it is known to be faithful when $n \leq 3$ and unfaithful when $n \geq 5$, but the case $n=4$ is still unsolved).

We will use the reduced Burau presentation $\psi: \mathbb{B}_n \rightarrow GL(n-1, \mathbb{Z}[s,s^{-1}])$ defined by the formula $\psi(\sigma_i) = A_i$, where

$$A_1 = \left( \begin{array}{ccccc}
-s^2 & 0 & 0\\
1 & 1 & 0 \\
0 & 0 & I_{n-3} \end{array} \right), \quad \quad
A_{n-1} = \left( \begin{array}{ccccc}
I_{n-3} & 0 & 0\\
0 & 1 & s^2 \\
0 & 0 & -s^2 \end{array} \right)$$

\noindent and for $1 < i < n-1$

$$
A_i = \left( \begin{array}{ccccc}
I_{i-2} & 0 & 0 & 0 & 0\\
0 & 1 & s^2 & 0 & 0 \\
0 & 0 & -s^2 & 0 & 0 \\
0 & 0 & 1 & 1 & 0 \\
0 & 0 & 0 & 0 & I_{n-i-2}  \end{array} \right)
$$

\noindent with $I_k$ being the identity matrix or order $k$.

With the notation in \cite{KasselTuraev}, $\psi (\beta) = \psi_n^r(\beta)$ after the substitution $t=s^2$. Then Lemma 3.12 and Theorem 3.13 in \cite{KasselTuraev} can be restated to give the following well-known presentation of the Conway polynomial in terms of the reducible Burau representation:

\begin{Teo}\emph{\cite{KasselTuraev}}\label{ConwayBurau}
Let $\alpha \in \mathbb{B}_n$ be a braid and $\widehat{\alpha}$ the link obtained as the closure of $\alpha$. Then the Conway polynomial of $\widehat{\alpha}$ is given by

$$
\nabla(\widehat{\alpha})(z) = (-1)^{n+1} \, \frac{s^{-e_\alpha}}{[n]} \, |\psi(\alpha) - I_{n-1}|
$$

\noindent after the substitution  $s^{-1} - s \, = \, z$, where $[n] = \frac{s^{-n}-s^n}{s^{-1}-s}$ and $e_\alpha \in \mathbb{Z}$ denotes the image of $\alpha$ under the homomorphism $\mathbb{B}_n \, \rightarrow \, \mathbb{Z}$ sending each generator $\sigma_i$ to 1.
\end{Teo}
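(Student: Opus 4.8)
The plan is to show that the right-hand side, regarded as a function
\[
F(\alpha)\;=\;(-1)^{n+1}\,\frac{s^{-e_\alpha}}{[n]}\,|\psi(\alpha)-I_{n-1}|,
\]
descends to a link invariant and satisfies the two properties that characterize the Conway polynomial after the substitution $s^{-1}-s=z$, namely $\nabla(\bigcirc)=1$ and the skein relation. Since $\nabla$ is the unique link invariant with these two properties, this identifies $F(\alpha)$ with $\nabla(\widehat{\alpha})$.

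First I would prove that $F$ is a genuine link invariant by means of Markov's theorem. Conjugation invariance is immediate: for any $\beta\in\mathbb{B}_n$ one has $|\psi(\beta\alpha\beta^{-1})-I_{n-1}|=|\psi(\beta)(\psi(\alpha)-I_{n-1})\psi(\beta)^{-1}|=|\psi(\alpha)-I_{n-1}|$, while $e_{\beta\alpha\beta^{-1}}=e_\alpha$ and $[n]$ is unchanged. It then remains to check invariance under stabilization $\alpha\mapsto\alpha\sigma_n^{\pm1}\in\mathbb{B}_{n+1}$, and this is the step I expect to be the main obstacle. Using the block form of $A_n=\psi(\sigma_n)$, one expands $|\psi_{n+1}(\alpha\sigma_n^{\pm1})-I_n|$ along the last row and column to relate it to $|\psi_n(\alpha)-I_{n-1}|$, and one must verify that the increase in matrix size, the change $e_\alpha\mapsto e_\alpha\pm1$ in the exponent sum, and the replacement of $[n]$ by $[n+1]$ conspire to cancel. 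The quantum-integer recursion $[n+1]=s^{-1}[n]+s^{n}$ is exactly what delivers the cancellation, and tracking the sign $(-1)^{n+1}$ across the dimension shift requires care.

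Next I would fix the normalization by evaluating $F$ on the unknot. Presenting it as the closure of the empty braid in $\mathbb{B}_1$ gives $|\psi(\alpha)-I_0|=1$ (an empty determinant), $e_\alpha=0$ and $[1]=1$, so $F=1$; the closure of $\sigma_1\in\mathbb{B}_2$ yields the same value and serves as a consistency check on the constants $(-1)^{n+1}$ and $[n]$.

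Finally I would verify the skein relation. After conjugation I may assume the distinguished crossing is the last letter of a braid word, so that $L_+,L_-,L_0$ are the closures of $\alpha'\sigma_i$, $\alpha'\sigma_i^{-1}$ and $\alpha'$, with exponent sums $e_{\alpha'}+1$, $e_{\alpha'}-1$ and $e_{\alpha'}$. The key algebraic input is that each $A_i$ has eigenvalues $1$ and $-s^2$, hence satisfies $(A_i-I)(A_i+s^2I)=0$; this makes $A_i-I$ of rank one and yields the identity $A_i^{-1}-I=s^{-2}(A_i-I)$. Writing $A_i-I=uv^{\mathsf T}$ and $M=\psi(\alpha')$, the matrix determinant lemma gives $|MA_i-I|=|M-I|(1+c)$ and $|MA_i^{-1}-I|=|M-I|(1+s^{-2}c)$ with $c=v^{\mathsf T}(M-I)^{-1}Mu$, whence the cross terms cancel and
\[
s^{-1}|MA_i-I|-s\,|MA_i^{-1}-I|=|M-I|\bigl(s^{-1}-s\bigr).
\]
Multiplying both sides by $(-1)^{n+1}s^{-e_{\alpha'}}/[n]$ and absorbing the exponent shifts $s^{\mp1}$ into $s^{-e_{\alpha'}\mp1}$, while recalling $z=s^{-1}-s$, gives precisely $\nabla(L_+)-\nabla(L_-)=z\,\nabla(L_0)$; the case where $M-I$ is singular follows since both sides are polynomial identities. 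With invariance, normalization and the skein relation established, uniqueness of the Conway polynomial completes the argument. As an alternative to the Markov step, one may instead invoke the classical identification of $|\psi(\alpha)-I_{n-1}|$ with the Alexander polynomial of $\widehat{\alpha}$ via the action on the homology of the infinite cyclic cover of the punctured disk, and then merely match the Conway normalization.
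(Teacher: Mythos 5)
Your argument is essentially correct, but there is nothing in the paper to compare it against: the paper does not prove this statement at all, it imports it from Kassel--Turaev (Lemma 3.12 and Theorem 3.13 there, after the substitution $t=s^2$), so the ``paper's proof'' is a citation. What you supply is a self-contained verification by the standard uniqueness route: Markov invariance, normalization on the unknot, and the skein relation. Your skein computation is the nicest part and checks out: each $A_i-I_{n-1}$ does have rank one, $(A_i-I)(A_i+s^2I)=0$ gives $A_i^{-1}-I=s^{-2}(A_i-I)$, and the matrix determinant lemma yields $s^{-1}|MA_i-I|-s|MA_i^{-1}-I|=(s^{-1}-s)|M-I|$, which after multiplying by $(-1)^{n+1}s^{-e_{\alpha'}}/[n]$ is exactly the skein relation; the degenerate case is handled correctly as a polynomial identity. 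Two places deserve more care before this counts as a complete proof. First, the stabilization step is only a plan, and it is genuinely the technical heart: note that embedding $\mathbb{B}_n\hookrightarrow\mathbb{B}_{n+1}$ does not simply border $\psi_n(\alpha)$ by an identity row and column (the generator $\sigma_{n-1}$ changes from ``last'' type to ``middle'' type, contributing an extra nonzero entry in the new row), so the block structure of $\psi_{n+1}(\alpha)$ must be worked out before the cofactor expansion and the recursion $[n+1]=s^{-1}[n]+s^n$ can be applied. Second, the uniqueness argument needs the value of $F$ on closures of trivial braids: $\psi(\varepsilon)=I_{n-1}$ gives $F=0$ for unlinks with at least two components, matching $\nabla=0$ on split links, and this is what lets a resolution tree in braid form (as in Proposition 4.2 of the paper) pin down $F$ on every link. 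Your closing remark, that one could instead identify $|\psi(\alpha)-I_{n-1}|$ with the Alexander polynomial of the infinite cyclic cover and only match normalizations, is in fact closer to what the cited source does.
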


Note that $e_\alpha$ is well defined, as the exponent sum of a braid word is invariant under the (homogeneous) relations of the braid group. Moreover, $e_\alpha$ is invariant under conjugation.

At this point, we find useful to show a slight modification of a result by P. V. Koseleff and D. Pecker \cite{KosPeck} which simplifies the substitution $s^{-1} - s \, = \, z$ in the formula above by using Fibonacci polynomials. Recall that Fibonacci polynomials are defined by the recurrence relation $F_n(z) = z \, F_{n-1}(z) + F_{n-2}(z)$ for $n \geq 2$, starting with $F_0(z) = 0$ and $F_1(z) = 1$.

There exist closed combinatorial formulas to express the Fibonacci polynomials; however, we will not use them through this paper. For our purposes we need to extend these polynomials to the case when the subindex is negative; we do it in the natural way, by defining $F_{-n}(z) = (-1)^{n+1} F_n(z)$.

Now we are ready to state our adapted version of Lemma 4.1 in \cite{KosPeck}:

\begin{Lema}\emph{\cite{KosPeck}}\label{Koseleff}
Let $F_n(z)$ be the $n^{th}$ Fibonacci polynomial. After the substitution $z = s^{-1} - s$, the identity $(s^{-1})^n + (-s)^n  \, = \, F_{n+1}(z) + F_{n-1}(z)$ holds for any integer $n$.
\end{Lema}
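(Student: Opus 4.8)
The plan is to prove the identity by exhibiting both sides as solutions of the \emph{same} second-order linear recurrence and then matching two initial values. Write $G_n = (s^{-1})^n + (-s)^n = s^{-n} + (-1)^n s^n$ for the left-hand side, and $H_n(z) = F_{n+1}(z) + F_{n-1}(z)$ for the right-hand side. First I would observe that $H_n$ inherits the Fibonacci recurrence: since $F_{n+1}$ and $F_{n-1}$ each satisfy $X_n = z\,X_{n-1} + X_{n-2}$, so does their sum, giving $H_n = z\,H_{n-1} + H_{n-2}$ for all $n$.

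The key step is a direct verification that, after the substitution $z = s^{-1}-s$, the sequence $G_n$ obeys this very recurrence. I would expand
$$
(s^{-1}-s)\bigl(s^{-(n-1)} + (-1)^{n-1}s^{n-1}\bigr) + s^{-(n-2)} + (-1)^{n-2}s^{n-2},
$$
and collect terms: the two copies of $s^{-(n-2)}$ cancel, the two copies of $(-1)^{n}s^{n-2}$ cancel, and what survives reassembles precisely into $s^{-n} + (-1)^n s^n = G_n$. Thus $G_n = z\,G_{n-1} + G_{n-2}$ as well, so $G_n$ and $H_n$ solve identical recurrences.

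It then suffices to match two consecutive base cases. At $n=0$ one has $G_0 = 2$ and $H_0 = F_1 + F_{-1} = 1 + 1 = 2$, using the extension $F_{-1} = (-1)^{2}F_1 = 1$; at $n=1$ one has $G_1 = s^{-1}-s = z$ and $H_1 = F_2 + F_0 = z + 0 = z$. Since the recurrence $X_n = z\,X_{n-1} + X_{n-2}$ carries leading coefficient $1$ on the $X_{n-2}$ term, it can be run in both directions (equivalently $X_{n-2} = X_n - z\,X_{n-1}$), so agreement at two consecutive indices propagates by a two-sided induction to every integer $n$, negative values included.

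The only point demanding genuine care is the negative-index regime: I must confirm that the prescribed extension $F_{-n} = (-1)^{n+1}F_n$ is exactly the one generated by running the Fibonacci recurrence \emph{backward}, so that $H_n$ really does satisfy $H_n = z\,H_{n-1} + H_{n-2}$ as $n$ crosses $0$. Granting this compatibility — which is a short check on the definition — the two-sided induction closes and there is no further obstacle; the identity holds for all $n \in \mathbb{Z}$.
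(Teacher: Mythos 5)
Your argument is correct, and it is worth noting that the paper itself gives no proof of this lemma: it is stated as an adaptation of Lemma 4.1 of Koseleff--Pecker and simply cited, so there is no in-paper argument to compare against. Your route --- showing that $G_n = s^{-n}+(-1)^n s^n$ and $H_n = F_{n+1}+F_{n-1}$ both satisfy the recurrence $X_n = z\,X_{n-1}+X_{n-2}$ under $z = s^{-1}-s$, matching the two values at $n=0$ and $n=1$, and propagating in both directions because the recurrence has unit coefficient on $X_{n-2}$ and is therefore invertible --- is a clean, self-contained elementary proof that supplies exactly the verification the paper delegates to the reference. The one point you defer, namely that the prescribed extension $F_{-n}=(-1)^{n+1}F_n$ coincides with the extension obtained by running the Fibonacci recurrence backward (so that $F_m = z\,F_{m-1}+F_{m-2}$ holds for \emph{every} integer $m$, in particular as $m$ crosses $0$), does hold and should be written out for completeness: at $m=1$ the relation reads $1 = z\cdot 0 + F_{-1} = 1$; at $m=0$ it reads $0 = z\cdot 1 + F_{-2} = z - z$; and for the remaining negative indices a short induction using $F_{-(n+1)} = F_{-(n-1)} - z\,F_{-n} = (-1)^n F_{n-1} + (-1)^n z\,F_n = (-1)^{n+2}F_{n+1}$ closes the check. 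With that supplied, your two-sided induction is complete and the identity holds for all $n\in\mathbb{Z}$.
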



\section{Conway polynomials of 3-braids differing in $\Delta^2$}

From now on, we consider the braid group on $3$ strands, $\mathbb{B}_3$, unless otherwise stated. The following result provides a relation between the Conway polynomials of two closed braids differing in an even power of the Garside element $\Delta = \sigma_1 \sigma_2 \sigma_1$ in $\mathbb{B}_3$.

\begin{Teo}\label{NuestroTeo}
Let $\alpha, \beta \in \mathbb{B}_3$ with $\beta = \Delta^{2k} \,\alpha$, $k > 0$. Then, the difference between the Conway polynomials of their closures is given by
$$
\nabla(\widehat{\beta}) - \nabla(\widehat{\alpha}) = z \displaystyle\sum_{i=0}^{k-1}\left(F_{e_\alpha + 6i + 4} + F_{e_\alpha + 6i + 2}\right),
$$
with $F_n$ being the $n^{th}$  Fibonacci polynomial for any integer $n$.
\end{Teo}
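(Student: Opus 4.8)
The plan is to push everything through the reduced Burau representation $\psi\colon \mathbb{B}_3 \to GL(2,\mathbb{Z}[s,s^{-1}])$ and to exploit the fact that $\Delta^2$ becomes a scalar. A direct multiplication of the generator matrices gives $\psi(\Delta) = \psi(\sigma_1)\psi(\sigma_2)\psi(\sigma_1)$, and one checks that $\psi(\Delta)^2 = s^6 I_2$. Hence $\psi(\Delta^{2k}) = s^{6k} I_2$ and, writing $M := \psi(\alpha)$, we get $\psi(\beta) = s^{6k} M$. Together with $e_\beta = e_\alpha + 6k$ (because $e_\Delta = 3$), this reduces the theorem to comparing $|M - I_2|$ with $|s^{6k}M - I_2|$ via Theorem \ref{ConwayBurau}.

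For the comparison I would use the elementary $2\times 2$ identity $|\lambda M - I_2| = \lambda^2 \det M - \lambda \operatorname{tr} M + 1$, together with the observation that $\det M = \det \psi(\alpha) = (-s^2)^{e_\alpha}$, since each generator matrix has determinant $-s^2$ and the determinant is multiplicative. Substituting $\lambda = 1$ and $\lambda = s^{6k}$ into Theorem \ref{ConwayBurau} and subtracting, the trace term cancels, leaving
$$\nabla(\widehat\beta) - \nabla(\widehat\alpha) = \frac{s^{-e_\alpha}}{[3]}\big[(s^{6k}-1)\det M + s^{-6k} - 1\big].$$
Plugging in $\det M = (-s^2)^{e_\alpha}$ and using that $6k$ is even (so the signs $(-1)^{e_\alpha}$ and $(-1)^{e_\alpha+6k}$ agree), the bracket regroups as $\big((s^{-1})^{e_\alpha+6k}+(-s)^{e_\alpha+6k}\big) - \big((s^{-1})^{e_\alpha}+(-s)^{e_\alpha}\big)$; Lemma \ref{Koseleff} then turns this into a difference of Fibonacci polynomials.

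Writing $L_n := F_{n+1} + F_{n-1}$ as shorthand, the previous step yields the compact form $\nabla(\widehat\beta) - \nabla(\widehat\alpha) = \big(L_{e_\alpha + 6k} - L_{e_\alpha}\big)/[3]$. The final and most delicate task is to reconcile this single quotient with the telescoping sum in the statement. I would prove the identity $L_{m+6} - L_m = (s^{-3}-s^3)\,L_{m+3}$ for every integer $m$, which follows quickly from $L_n = (s^{-1})^n + (-s)^n$ (Lemma \ref{Koseleff}) and the fact that the two quantities $s^{-1}$ and $-s$ multiply to $-1$. Telescoping this identity over $m = e_\alpha + 6i$ for $i = 0,\dots,k-1$ gives $L_{e_\alpha+6k} - L_{e_\alpha} = (s^{-3}-s^3)\sum_{i=0}^{k-1} L_{e_\alpha+6i+3}$. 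Dividing by $[3]$ and using $z\,[3] = s^{-3}-s^3$ (which is just the definition of $[3]$ after $z = s^{-1}-s$) produces exactly $z\sum_{i=0}^{k-1}\big(F_{e_\alpha+6i+4}+F_{e_\alpha+6i+2}\big)$, as claimed.

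The main obstacle is precisely this last bookkeeping: the Burau computation hands us a quotient by $[3]$, and the content of the theorem is that this quotient is in fact a polynomial of the stated telescoping shape. The key that unlocks it is the scalar identity $\psi(\Delta^2) = s^6 I_2$ at the start, which forces the trace of $\alpha$ to disappear and leaves an expression depending only on $e_\alpha$ and $k$.
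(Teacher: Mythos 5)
Your proposal is correct and follows essentially the same route as the paper: the reduced Burau representation, the scalarity $\psi(\Delta^{2})=s^{6}I_{2}$ forcing the trace terms to cancel, $\det\psi(\alpha)=(-s^{2})^{e_\alpha}$, and Lemma \ref{Koseleff} to convert powers of $s^{-1}$ and $-s$ into Fibonacci polynomials. The only difference is organizational: you treat general $k$ in one stroke via the telescoping identity $L_{m+6}-L_{m}=(s^{-3}-s^{3})L_{m+3}$ (which checks out, since $s^{-1}\cdot(-s)=-1$), whereas the paper verifies the case $k=1$ by an explicit polynomial identity and then inducts on $k$; both are valid and yield the stated formula.
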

%

\begin{proof}
The reduced Burau representation of $\mathbb{B}_3$ is given by the matrices

$$B_1 = \left( \begin{array}{cc}
-s^2 & 0 \\
1 & 1 \end{array} \right) \quad \quad \mbox{ and } \quad \quad
B_2 = \left( \begin{array}{cc}
1 & s^2 \\
0 & -s^2 \end{array} \right).
$$

Consider the case $k = 1$; the general case will be an extension of the result obtained for this particular case.

As the Garside element in $\mathbb{B}_3$ is $\Delta = \sigma_1\sigma_2\sigma_1$, it follows that $e_\beta = e_\alpha + 6$. Since $\psi(\Delta^2) = s^6 \mbox{Id}_2$, we have that $\mbox{Tr}(\psi(\beta)) = s^{6} \cdot \mbox{Tr}(\psi(\alpha))$. \, We now combine these facts with Theorem $\ref{ConwayBurau}$ for computing the Conway polynomial of both closed braids.

Since for a square matrix $A$ of order two $|A - x \mbox{Id}_2| = x^2 - x \mbox{Tr}(A) + |A|$ and taking into account the substitution $s^{-1} - s \, = \, z$, we have

$$\nabla(\widehat{\alpha}) = \frac{s^{-e_\alpha}}{[3]} |\psi(\alpha) - \mbox{ Id}_2| \, = \, \frac{s^{-e_\alpha}}{[3]} \left[ 1 - \mbox{Tr}(\psi(\alpha)) + |\psi(\alpha)|\right]$$
$$= \frac{1}{[3]} \left[ s^{-e_\alpha} - s^{-e_\alpha} \mbox{Tr}(\psi(\alpha)) + (-1)^{e_\alpha} s^{e_\alpha} \right]. $$

$$\nabla(\widehat{\beta}) = \frac{s^{-e_\beta}}{[3]} |\psi(\beta) - Id| \, = \, \frac{s^{-e_\beta}}{[3]} \left[ 1 - \mbox{Tr}(\psi(\beta)) + |\psi(\beta)|\right]$$
$$ = \frac{1}{[3]} \left[ s^{-e_\alpha -6} - s^{-e_\alpha} \mbox{Tr}(\psi(\alpha)) + (-1)^{e_\alpha} s^{e_\alpha + 6} \right]. $$

Now we compute their difference:

$$\nabla(\widehat{\beta}) - \nabla(\widehat{\alpha}) = \frac{1}{[3]} \left( s^{-e_\alpha - 6} - s^{-e_\alpha} + (-1)^{e_\alpha} (s^{e_\alpha + 6} - s^{e_\alpha}) \right) $$
$$ =  \left[ (-s)^{e_\alpha + 4} + (s^{-1})^{e_\alpha + 4} \right] \, - \, \left[ (-s)^{e_\alpha + 2} + (s^{-1})^{e_\alpha + 2} \right].$$

The second equality comes from the fact that
$$
\left(s^{-1} - s \right) \cdot \left[s^{-e_\alpha - 6} - s^{-e_\alpha} + (-1)^{e_\alpha} (s^{e_\alpha + 6} - s^{e_\alpha}) \right]$$
$$ = \, \left(s^{-3} - s^3 \right) \cdot \left[(-s)^{e_\alpha + 4} + s^{-e_\alpha-4} - (-s)^{e_\alpha + 2} - s^{-e_\alpha - 2} \right].
$$

Applying twice Lemma \ref{Koseleff} we obtain:

$$
\nabla(\widehat{\beta}) - \nabla(\widehat{\alpha}) =  \left(F_{e_\alpha + 5} + F_{e_\alpha + 3} \right) - \left( F_{e_\alpha + 3} + F_{e_\alpha + 1} \right)
$$
$$
= F_{e_\alpha + 5} - F_{e_\alpha + 1} = z \left(F_{e_\alpha + 4} + F_{e_\alpha + 2} \right).
$$

We complete the proof by induction: Suppose the statement true for $1, 2, \ldots, k-1$, and let $\beta = \Delta^{2k} \alpha$, $\gamma = \Delta^{2(k-1)} \alpha$.

$$
\nabla (\widehat{\beta}) - \nabla (\widehat{\alpha}) = \underbrace{ \nabla (\widehat{\beta}) - \nabla (\widehat{\gamma}) }_{ I } + \underbrace{ \nabla (\widehat{\gamma}) - \nabla (\widehat{\alpha}) }_{ II }
$$

$$
= \left[ z \left(F_{e_\gamma + 4} + F_{e_\gamma + 2} \right) \right] + \left[ z \displaystyle\sum_{i=0}^{k-2}\left(F_{e_\alpha + 6i + 4} + F_{e_\alpha + 6i + 2} \right) \right]
$$

$$
= z \displaystyle\sum_{i=0}^{k-1}\left(F_{e_\alpha + 6i + 4} + F_{e_\alpha + 6i + 2} \right).
$$

Applying the induction hypothesis to $I$ and $II$ yields the second equality. The third one holds since $e_\gamma = e_\alpha + 6(k-1)$.

\end{proof}

With some extra work, the theorem above can be deduced from work by Murasugi in \cite[Proposition 4.1]{Murasugi3braids}. He compares the normalized Alexander polynomial of two closed braids differing in an even power of $\Delta$ and provides an expression for their difference, with an indeterminacy on a power of $-t$. We think that the formula we present in Theorem \ref{NuestroTeo} is quite simpler even in the case of the Alexander polynomial (that is, just before the change of variables $s^{-1} - s = z$).

As a consequence of this result, we get an interesting corollary. The result for the even case was proved by Birman in \cite{BirmanJonespol}; as far as we know there is no reference for the odd case.

\begin{Cor}\label{Corolario1}
Let $\alpha \in \mathbb{B}_3$ with $e_\alpha= - 3r \neq 0$, $r > 0$, and consider $\beta = \Delta^{2r} \alpha$:
\begin{itemize}
\item If $r$ is even, then $\nabla(\widehat{\beta}) = \nabla(\widehat{\alpha})$.
\item If $r$ is odd, then $\nabla(\widehat{\beta}) = \nabla(\widehat{\alpha}) \, + 2z \displaystyle\sum_{i=0}^{r-1} F_{-3r + 6i + 4}$.
\end{itemize}
\end{Cor}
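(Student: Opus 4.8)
The plan is to obtain this corollary as a direct specialization of Theorem~\ref{NuestroTeo}. Setting $k=r$ and $e_\alpha=-3r$, that theorem immediately gives
$$
\nabla(\widehat{\beta})-\nabla(\widehat{\alpha})=z\sum_{i=0}^{r-1}\left(F_{-3r+6i+4}+F_{-3r+6i+2}\right),
$$
so everything reduces to evaluating this Fibonacci sum, and in particular to deciding when it vanishes. The crucial tool will be the symmetry $F_{-n}=(-1)^{n+1}F_n$ built into the extension of the Fibonacci polynomials to negative indices.

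Write $a_i=-3r+6i+4$ for the ``$+4$'' indices and $b_i=-3r+6i+2$ for the ``$+2$'' indices. First I would observe that every index appearing is congruent to $r$ modulo $2$, since $6i$ is even and $-3r+4\equiv -3r\equiv r\pmod 2$; this makes the sign $(-1)^{n+1}=(-1)^{r+1}$ the same for all the terms we shall flip. Next I would reindex the second sum by $i\mapsto r-1-i$ and compute $a_i+b_{r-1-i}=(-3r+6i+4)+(3r-4-6i)=0$, so that $b_{r-1-i}=-a_i$. The symmetry relation then yields $F_{b_{r-1-i}}=(-1)^{a_i+1}F_{a_i}=(-1)^{r+1}F_{a_i}$, whence
$$
\sum_{i=0}^{r-1}F_{b_i}=(-1)^{r+1}\sum_{i=0}^{r-1}F_{a_i}
\quad\text{and}\quad
\sum_{i=0}^{r-1}\left(F_{a_i}+F_{b_i}\right)=\bigl(1+(-1)^{r+1}\bigr)\sum_{i=0}^{r-1}F_{a_i}.
$$

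Finally I would split according to the parity of $r$. If $r$ is even then $(-1)^{r+1}=-1$ and the prefactor $1+(-1)^{r+1}$ vanishes, giving $\nabla(\widehat{\beta})=\nabla(\widehat{\alpha})$; if $r$ is odd then $(-1)^{r+1}=1$ and the prefactor equals $2$, giving $\nabla(\widehat{\beta})-\nabla(\widehat{\alpha})=2z\sum_{i=0}^{r-1}F_{-3r+6i+4}$, exactly as claimed. I do not expect any genuine obstacle here, since the statement is a formal consequence of Theorem~\ref{NuestroTeo}; the only point demanding care is the bookkeeping of signs and parities in the reindexing step, where one must be sure that the flipped indices are exact negatives of the unflipped ones and that all indices share the parity of $r$, so that the sign $(-1)^{r+1}$ can be pulled out of the sum.
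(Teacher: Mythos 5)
Your proposal is correct and follows essentially the same route as the paper: apply Theorem~\ref{NuestroTeo} with $k=r$, reindex the ``$+2$'' terms via $i\mapsto r-1-i$ so that they become $F$ evaluated at the negatives of the ``$+4$'' indices, and then use $F_{-n}=(-1)^{n+1}F_n$ together with the observation that all indices have the parity of $r$ to extract the factor $1+(-1)^{r+1}$. The sign and parity bookkeeping checks out, so nothing further is needed.
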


\begin{proof}
By applying Theorem \ref{NuestroTeo} we get

\begin{eqnarray*}
\nabla(\widehat{\beta}) & = & \nabla(\widehat{\alpha}) + z \sum_{i=0}^{r-1} \left( F_{-3r + 6i + 4} + F_{-3r + 6i + 2} \right)  \\
 & = & \nabla(\widehat{\alpha}) + z \sum_{i=0}^{r-1} \left( F_{-3r + 6i + 4} + F_{3r - 6i - 4} \right) \\
 & = & \nabla(\widehat{\alpha}) + z \sum_{i=0}^{r-1} \left[ F_{-3r + 6i + 4} + (-1)^{r+1} F_{-3r + 6i + 4} \right] \\
 & = & \nabla(\widehat{\alpha}) + [1 + (-1)^{r+1}] \, z \sum_{i=0}^{r-1} F_{-3r + 6i + 4}.
\end{eqnarray*}

The second equality holds since $F_{-3r+6i+2} = F_{3r-6j-4}$ when $j = (r-1) - i$. The third one holds since $F_{-n} = (-1)^{n+1} F_n$. This completes the proof.

\end{proof}


\section{Strongly quasipositive links with braid index 3}

In this section we continue working with braids on 3 strands. First of all, we want to remark that the class of positive links and the class of strongly quasipositive links are not equal even when considering links with braid index 3. In Section 6 we provide such an example by showing a link with braid index 3 which is not positive but strongly quasipositive.

From the point of view of the presentation given by Artin, $\mathbb{B}_3$ is generated by two generators, $\sigma_1$ and $\sigma_2$. However, if we consider the presentation given by Birman, Ko and Lee, $\mathbb{B}_3$ is generated by three generators: $\sigma_{12} = \sigma_1, \, \sigma_{23} = \sigma_2$ (corresponding to the Artin-generators) and $\sigma_{13} = \sigma_1^{-1} \sigma_2 \sigma_1$. See Figure \ref{Generadores}. From now on, we are going to work with BKL-generators, as strongly quasipositive links are closure of positive braids in terms of these generators.

\begin{figure}
\centering
\includegraphics[width = 11.5cm]{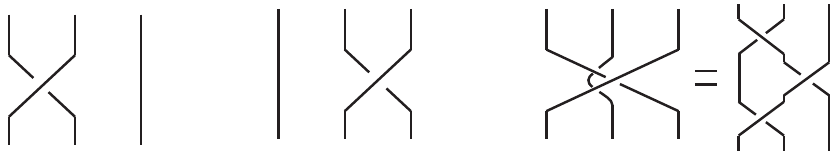}
\caption{\small{BKL-generators $\sigma_{12} = \sigma_1$, $\sigma_{23} = \sigma_2$ and $\sigma_{13} = \sigma_1^{-1}\sigma_2\sigma_1$.}}
\label{Generadores}
\end{figure}

Given an oriented diagram $D$ representing a link $L$, we can construct a resolution tree rooted at $D$ in the following way. Starting from the root, each node would form a triple (parent, leftchild, rightchild) of the form ($D_+, D_-, D_0$). See Figure \ref{trebol}. The edge joining $D_+$ and $D_-$ is labeled with 1, and the one joining $D_+$ and $D_0$ with $z$. This construction codifies the Conway polynomial skein relation. Let $L_1, L_2, \ldots, L_n$ be the leaves in the tree and $P_i,  \, 1 \leq i \leq n$ the product of the labels in the edges on the unique path connecting the leaf $L_i$ and the root of the tree. Then, if we know $\nabla(L_i)$, we can compute the Conway polynomial of the link $L$:
$$
\nabla(L) = \sum_{i=1}^n P_i \cdot \nabla(L_i).
$$

At this point, we can state our main result:

\begin{Teo}\label{main}
Strongly quasipositive links with braid index 3 have positive Conway polynomial.
\end{Teo}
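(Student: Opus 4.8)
The plan is to induct on the number of band generators $\sigma_{13}$ occurring in a strongly quasipositive word representing the braid, exploiting the resolution tree described above together with Theorem~\ref{NuestroTeo}. The first observation is that the path-products $P_i$ labelling the leaves of any resolution tree are non-negative powers of $z$; hence $\nabla(L)=\sum_i P_i\,\nabla(L_i)$ is a non-negative combination, and it suffices to arrange a tree all of whose leaves carry links of non-negative Conway polynomial. For the base case, a strongly quasipositive word involving only $\sigma_{12}=\sigma_1$ and $\sigma_{23}=\sigma_2$ is an Artin-positive word, so its closure is a positive link and has non-negative Conway polynomial by the theorems of Van Buskirk~\cite{VanBuskirk} and Cromwell~\cite{CromwellHom}.

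For the inductive step, let $\beta$ be strongly quasipositive and pick an occurrence of $\sigma_{13}$. Applying the skein relation at this positive crossing produces the smoothed child $\widehat{\beta_0}$ and the switched child $\widehat{\beta_-}$, with $\nabla(\widehat{\beta})=\nabla(\widehat{\beta_-})+z\,\nabla(\widehat{\beta_0})$. The smoothing deletes the band, so $\beta_0$ is again a positive band word with one fewer $\sigma_{13}$; by the inductive hypothesis $\nabla(\widehat{\beta_0})\ge 0$, and since $z$ times a non-negative quantity is non-negative, this term is harmless. Everything therefore reduces to controlling the switched braid $\beta_-$, in which a single band $\sigma_{13}$ has become $\sigma_{13}^{-1}$.

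To treat $\beta_-$ I would use the band relations $\sigma_{23}\sigma_{12}=\sigma_{13}\sigma_{23}=\sigma_{12}\sigma_{13}$ together with $\delta^3=\Delta^2$, where $\delta=\sigma_{12}\sigma_{23}$ is the dual Garside element, to rewrite the single inverse band as $\Delta^{-2}$ times a positive band word. Absorbing the central factor, $\beta_-$ becomes $\Delta^{-2}W$ for a strongly quasipositive $W$, so that $W=\Delta^{2}\beta_-$ and Theorem~\ref{NuestroTeo} (with $k=1$) expresses $\nabla(\widehat{\beta_-})$ in terms of $\nabla(\widehat{W})$ and a Fibonacci correction $z\,(F_{e_{\beta_-}+4}+F_{e_{\beta_-}+2})$. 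Because $W$ is a positive band word its Artin exponent sum satisfies $e_W\ge 0$, which forces every Fibonacci subindex appearing here and in the analogue of Corollary~\ref{Corolario1} to be positive; and one checks directly from $F_1=1$, $F_2=z$ and the recurrence that $F_n$ has non-negative coefficients for every $n\ge 1$. Thus the Fibonacci contributions are non-negative, and combining them with the inductively-positive closures is what completes the positivity bookkeeping.

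The main obstacle is precisely this switched branch. Turning $\sigma_{13}^{-1}$ into $\Delta^{-2}$ times positive bands is not canonical, since pushing an inverse band past a positive one need not remain positive; the delicate point is to perform the rewriting so that the resulting word $W$ is genuinely strongly quasipositive with controlled exponent sum, guaranteeing that Theorem~\ref{NuestroTeo} contributes only non-negative Fibonacci terms rather than a sign-indefinite difference. Making this uniform, and checking that the corrections accumulated from successive $\Delta^{2}$-extractions never overshoot, is where the braid-index-$3$ combinatorics and the explicit formulas of Theorem~\ref{NuestroTeo} and Corollary~\ref{Corolario1} are indispensable; I would expect to spend the bulk of the effort there.
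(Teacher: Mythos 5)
Your overall framework (resolution tree with non-negative path labels, reduce to leaves with positive Conway polynomial) matches the paper, but the way you apply the skein relation creates a gap that the paper's proof is specifically designed to avoid. You resolve at an occurrence of $\sigma_{13}$, so your switched child $\beta_-$ genuinely contains a negative band $\sigma_{13}^{-1}$, and you propose to control it by writing $W=\Delta^{2}\beta_-$ and invoking Theorem~\ref{NuestroTeo}. But that theorem gives $\nabla(\widehat{W})=\nabla(\widehat{\beta_-})+z\,(F_{e_{\beta_-}+4}+F_{e_{\beta_-}+2})$, hence $\nabla(\widehat{\beta_-})=\nabla(\widehat{W})-z\,(F_{e_{\beta_-}+4}+F_{e_{\beta_-}+2})$: the Fibonacci correction is \emph{subtracted}. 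Showing that both $\nabla(\widehat{W})$ and the Fibonacci terms have non-negative coefficients therefore proves nothing about $\nabla(\widehat{\beta_-})$ --- you would need the coefficientwise inequality $\nabla(\widehat{W})\geq z\,(F_{e_{\beta_-}+4}+F_{e_{\beta_-}+2})$, which is a much stronger statement than positivity and which your induction does not supply. The sentence claiming that non-negativity of the Fibonacci contributions ``completes the positivity bookkeeping'' has the sign backwards, and this is on top of the unresolved rewriting problem you yourself flag (whether $\Delta^{2}\beta_-$ can be realized by a BKL-positive word at all).

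The paper sidesteps the negative crossing entirely: using the relations $\sigma_2\sigma_1=\sigma_{13}\sigma_2=\sigma_1\sigma_{13}$ and cyclic permutation, every BKL-positive word on $3$ strands either can be rewritten to contain a square $\sigma_i\sigma_i$, or is $(\sigma_1\sigma_2\sigma_{13})^k$ up to conjugation. Resolving at one crossing of a square $P\sigma_i^2Q$ gives children $PQ$ and $P\sigma_iQ$, \emph{both} BKL-positive, so no inverse generator ever appears and the only non-trivial leaves are the braids $(\sigma_1\sigma_2\sigma_{13})^k=\Delta^{2k}\sigma_2^{-3k}$. There the $\Delta^2$-formula is applied with $e_\alpha=-3k$, and it is precisely this negative exponent sum that makes all the Fibonacci subindices odd, the correction additive, and the result manifestly positive (Corollary~\ref{Cor123}). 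If you want to salvage your induction on the number of $\sigma_{13}$'s, you would have to replace the crossing switch at $\sigma_{13}$ by this square-creating rewriting step; as stated, the switched branch is a genuine gap.
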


First of all, a result by Stoimenov \cite{Stoimenov3braids} states that any strongly quasipositive link with braid index 3, is the closure of a BKL-positive braid on 3 strands. Hence, we just have to focus on proving the positivity of the Conway polynomial of those closed braids.

The proof of Theorem \ref{main} lies on two results. The first one gives a procedure for constructing a particular resolution tree starting from a BKL-positive braid word, whose branches have positive labels and each leaf is of one of 8 types. In the second one we show that all possible leaves obtained by following the algorithm above, have positive Conway polynomial. The combination of both results completes the proof of Theorem \ref{main}.

\begin{figure}
\centering
\includegraphics[width = 9cm]{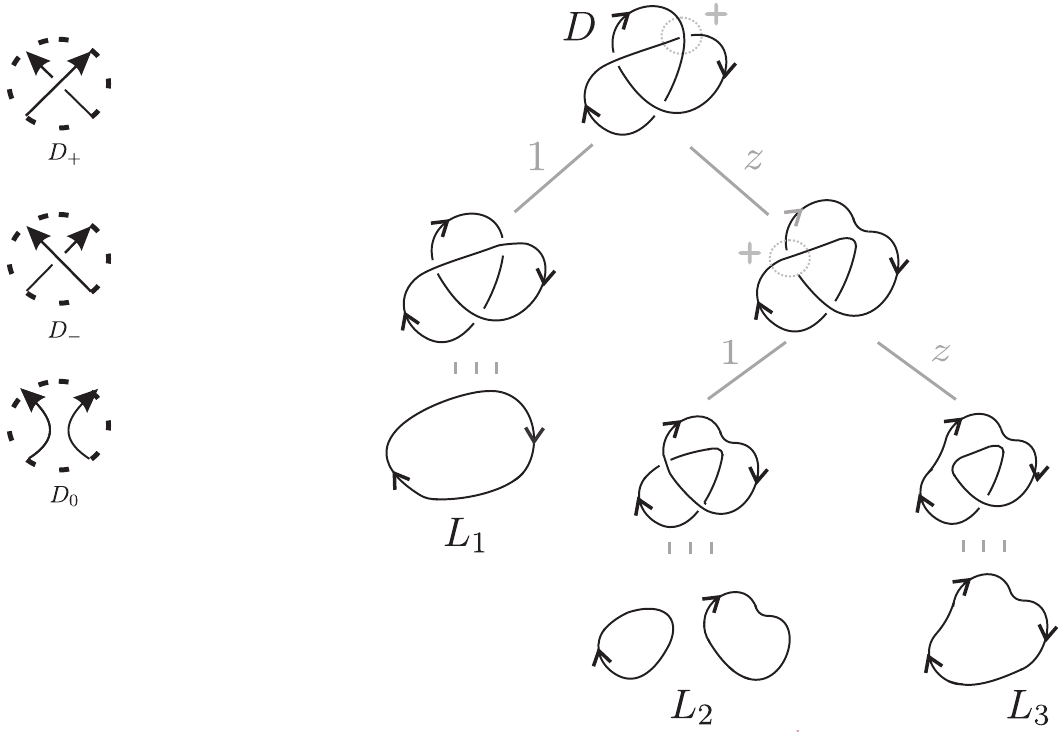}
\caption{\small{A resolution tree for the trefoil knot represented by the diagram $D$. It has three leaves, $L_1$, $L_2$ and $L_3$, and three paths joining them to the root: $P_1 = 1$, $P_2 = z$ and $P_3 = z^2$. As the value of the Conway polynomial of the unknot and split links is $1$ and $0$ respectively, one gets $\nabla(D) = 1 \cdot 1 + z \cdot 0 + z^2 \cdot 1 = 1 + z^2$.}}
\label{trebol}
\end{figure}

\begin{Prop}\label{algoritmo}
Let $\alpha \in \mathbb{B}_3$ be a BKL-positive braid; then, it is possible to construct a resolution tree for the link $\widehat{\alpha}$, whose branches have positive labels and whose leaves are closed braids belonging to the set $\{\varepsilon, \sigma_1, \sigma_1\sigma_2\} \cup \{(\sigma_1\sigma_2\sigma_{13})^k, \, k > 0\},$ with $\varepsilon$ being the trivial braid in $\mathbb{B}_3$.
\end{Prop}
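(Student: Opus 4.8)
The plan is to build the tree by induction on the length $n$ of a BKL-positive word $w$ representing $\alpha$, written in the three band generators which I abbreviate $a = \sigma_{12} = \sigma_1$, $b = \sigma_{23} = \sigma_2$ and $c = \sigma_{13}$. Throughout I would work up to conjugacy, since the closure $\widehat{w}$ (and hence its Conway polynomial) is invariant under cyclic permutation of $w$ (Markov conjugation) and under the braid/BKL relations, which are isotopies of the closed-braid diagram; in particular I may rewrite $w$ freely using the defining relations $ba = ac = cb$ before resolving any crossing. The engine of the construction is a single length-reducing skein move: whenever a conjugate/rewrite of $w$ contains two equal consecutive generators $xx$, I resolve the second of them. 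Taking the displayed crossing as $D_+$, the branch $D_-$ turns $x$ into $x^{-1}$ and the $x x^{-1}$ cancels, producing a positive word of length $n-2$; the branch $D_0$ smooths the crossing, deleting that letter and producing a positive word of length $n-1$. Both children are again BKL-positive and strictly shorter, and the two new edges carry the labels $1$ and $z$, so the positivity of the labels is preserved at every step.

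The base cases $n \le 2$ are handled directly. For $n=0$ we get $\varepsilon$; for $n=1$ the generator $a$ is the leaf $\sigma_1$, while $b$ and $c$ are conjugate to it ($\widehat{b}=\widehat{\sigma_2}=\widehat{\sigma_1}$ since $\sigma_2 = \Delta\sigma_1\Delta^{-1}$, and $\widehat{c}=\widehat{\sigma_1^{-1}\sigma_2\sigma_1}=\widehat{\sigma_2}$). For $n=2$ the squares reduce, via the move above, to words of length $\le 1$; the six non-square pairs all give the leaf $\sigma_1\sigma_2$, because the good pairs $ba,ac,cb$ are the single element $\delta=\sigma_2\sigma_1$, conjugate to $ab$, and the bad pairs $ab,bc,ca$ are cyclically conjugate to one another.

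For the inductive step ($n\ge 3$) I would establish the following dichotomy for a cyclic positive word. Classify the ordered pairs of distinct generators as \emph{good} ($ba,ac,cb$, the three factorizations of the dual Garside element $\delta = ba = ac = cb$) or \emph{bad} ($ab,bc,ca$). First, if some cyclic neighbouring pair is a square or a good pair, I can always expose a square as a linear factor and reduce: a square is immediate, and a good pair such as $ba$ with preceding letter $p$ becomes a square after at most one rewrite -- if $p=b$ one already sees $bb$, if $p=a$ then $a(ba)=a(ac)$ gives $aa$, and if $p=c$ then $c(ba)=c(cb)$ gives $cc$ -- while the order-three symmetry $a\mapsto b\mapsto c\mapsto a$, realised by conjugation by $\delta$ (which sends $\sigma_{ij}\mapsto\sigma_{i+1,j+1}$ cyclically), makes the cases $ac$ and $cb$ identical. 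After rotating the square into a linear factor I apply the skein move and invoke the induction hypothesis on both shorter children. Second, if no cyclic pair is a square or good, then every pair is bad, i.e. each letter is followed by its successor in the cyclic order $a\to b\to c\to a$; cyclic consistency then forces $3\mid n$ and $w$ conjugate to $(abc)^k=(\sigma_1\sigma_2\sigma_{13})^k$, which is exactly one of the admissible leaves. Gluing the subtrees produced for the children completes the induction, with all path-labels products of $1$ and $z$.

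The skein bookkeeping and the base cases are routine; the substantive point, and the one I expect to demand the most care, is the combinatorial dichotomy. One must verify that $ba=ac=cb$ are the only relations available, so that they are length-preserving and act solely on good pairs, and conclude that a cyclic word admitting neither a square nor a good pair is forced to be a power of $abc$. This is really a statement about the dual braid monoid of $\mathbb{B}_3$, and the cleanest route is to set up the good/bad/square trichotomy on ordered pairs, track how each relation and each cyclic rotation permutes it, and exploit the $\delta$-conjugation symmetry to collapse the case analysis to a single good pair. Since every rewrite and rotation used is a conjugation or a defining relation, it preserves the closure, so the tree genuinely computes $\nabla(\widehat{\alpha})$ from the stated leaves.
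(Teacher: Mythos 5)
Your proposal is correct and follows essentially the same route as the paper: resolve a repeated generator $xx$ via the skein relation to get shorter positive words with labels $1$ and $z$, use the relation $\sigma_2\sigma_1=\sigma_1\sigma_{13}=\sigma_{13}\sigma_2$ to rewrite any occurrence of that pair so as to create a square (the paper matches the following letter where you match the preceding one, an immaterial difference), and observe that a cyclic word admitting neither a square nor such a pair forces each letter to be followed by its cyclic successor, hence equals $(\sigma_1\sigma_2\sigma_{13})^k$ up to conjugation. The only cosmetic differences are your explicit induction on word length and the good/bad pair terminology, which the paper leaves implicit.
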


\begin{proof}
Let $w$ be a BKL-positive word representing the braid $\alpha$ and $n$ its length. If $n = 1$, then $w$ is contained in $M_1$, the set containing BKL-positive words of length 1. If $n = 2$, then $w$ is either in $M_2$, the set containing those BKL-positive words of length 2 with two different letters, or it consists on two repeated letters, $w= \sigma_i\sigma_i$, and it can be split into the trivial word, $\varepsilon$, and one word of length one, $\sigma_i$, which is in $M_1$.

Suppose now that $n \geq 3$. If $w = P \sigma_i^2 Q$, with $P$ and $Q$ BKL-positive words, split it by writing $w$ as a node whose left child and right child are $w_1= PQ$ and $w_2= P \sigma_i Q$ respectively; the left branch would be labeled with 1, and the right one with $z$. Note that $w_1$ and $w_2$ have length $n-2$ and $n-1$ respectively. Repeat this procedure with the BKL-positive words obtained each time. Notice that we are not interested in each particular word, but in the link closure of the braid represented by the word (that is, each node in the tree is a BKL-positive word up to conjugation). Hence, we can replace any positive word by another positive word representing the same braid, or by another positive word corresponding to a cyclic permutation of its letters (which corresponds to a conjugate braid, hence to the same link).

Now let us see that every braid which does not belong to $\{\varepsilon\} \cup M_1 \cup M_2 \cup \{(\sigma_1\sigma_2\sigma_{13})^k, \, k > 0\}$ can be split by the above procedure.

Consider a BKL-positive braid word of length at least 3, with no equal consecutive letters. Note that the braids $\sigma_2\sigma_1 = \sigma_{13}\sigma_2 = \sigma_1\sigma_{13}$ are equivalent. Now, start reading the braid word from the left; each time you find either a $\sigma_2\sigma_1$, $\sigma_{13}\sigma_2$ or $\sigma_1\sigma_{13}$ occurrence, write this syllable in such a way that its last letter equals the first letter after it (in cyclic order), so you get two repeated generators together. If no occurrence of $\sigma_2\sigma_1, \sigma_{13}\sigma_2, \sigma_1\sigma_{13}, \sigma_1\sigma_1, \sigma_2\sigma_2, \sigma_{13}\sigma_{13}$ appears in any cyclic permutation of the word, then the letter after every $\sigma_1$ must be $\sigma_2$, the letter after every $\sigma_2$ must be $\sigma_{13}$, and the letter after every $\sigma_{13}$ must be $\sigma_1$, in every cyclic permutation of the word. Therefore, up to a cyclic permutation, the word equals $(\sigma_1\sigma_2\sigma_{13})^k$ for some $k > 0$.

As all the BKL-words in $M_1$ and $M_2$ are conjugated to $\sigma_1$ and $\sigma_1\sigma_2$ respectively, this procedure allows us to construct a resolution tree rooted in $w$, where all the branches have positive labels (either $1$ or $z$), and all the leaves belong to the set $\{\varepsilon, \sigma_1, \sigma_1\sigma_2\} \cup \{(\sigma_1\sigma_2\sigma_{13})^k, k>0\}$.
\end{proof}

\begin{figure}
\centering
\includegraphics[width = 13.5cm]{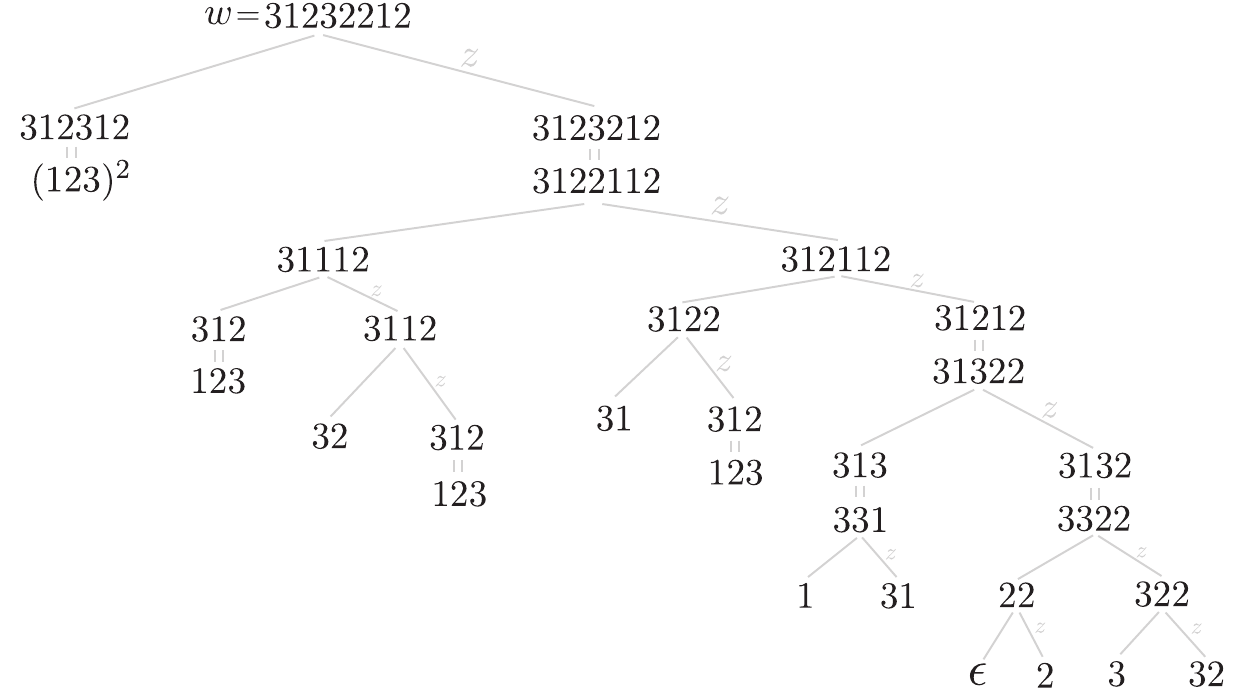}
\caption{\small{This image illustrates the algorithm in the proof of Proposition \ref{algoritmo}. The sign ``='' represents a cyclic move in the order of the letters (that is, a conjugation). Letters $1$, $2$ and $3$ represent generators $\sigma_1$, $\sigma_2$ and $\sigma_{13}$, respectively. }}
\label{Palabra}
\end{figure}

At this point, we just need to show that the closure of the braids in the above set have positive Conway polynomial. As closing the trivial braid or $\sigma_1$ gives an split link, their Conway polynomials are null. The closure of a braid represented by a word with two different letters, lets say $\sigma_1\sigma_2$, is the trivial knot, so its Conway polynomial is $1$.

It remains to prove the case of links which are closure of braids of the form $(\sigma_1\sigma_2\sigma_{13})^k$, with $k>0$. These are non-split links with 2 or 3 components, depending on the parity of $k$. Computing their Conway polynomial is not trivial; as a particular case of Corollary \ref{Corolario1}, we obtain the following result (the case $k$ even was also computed by Stoimenov in \cite{Stoimenov3braids}):

\begin{Cor} \label{Cor123}
The closure of the braid on 3 strands $(\sigma_1\sigma_2\sigma_{13})^k$ has positive Conway polynomial, for any integer $k > 0$. In fact, $\nabla(\widehat{(\sigma_1\sigma_2\sigma_{13})^k}) = 2z \displaystyle\sum_{i=0}^{k-1} F_{-3k + 6i - 4}$ when $k$ is odd, and it is null when $k$ is even.
\end{Cor}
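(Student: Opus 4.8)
The plan is to put $\beta := (\sigma_1\sigma_2\sigma_{13})^k$ into the exact form required by Corollary \ref{Corolario1} and then simply read off the Conway polynomial. Since every BKL-generator has exponent sum $1$, we have $e_\beta = 3k$; setting $\alpha := \Delta^{-2k}\beta$ gives $e_\alpha = 3k - 6k = -3k$, so that $\beta = \Delta^{2k}\alpha$ satisfies the hypotheses of Corollary \ref{Corolario1} with $r = k$. Thus everything reduces to computing $\nabla(\widehat{\alpha})$, and I claim this vanishes identically.

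To see $\nabla(\widehat{\alpha}) = 0$ I would compute the reduced Burau matrix of the generating block. Using $\sigma_{13} = \sigma_1^{-1}\sigma_2\sigma_1$ one gets $\psi(\sigma_1\sigma_2\sigma_{13}) = B_1 B_2 B_1^{-1} B_2 B_1$, and a direct calculation shows this matrix is upper triangular with diagonal entries $s^{6}$ and $-1$. Consequently $\psi(\beta) = \psi(\sigma_1\sigma_2\sigma_{13})^k$ is upper triangular with diagonal $(s^{6k}, (-1)^k)$, and since $\psi(\Delta^2) = s^6\,\mathrm{Id}_2$ yields $\psi(\Delta^{-2k}) = s^{-6k}\,\mathrm{Id}_2$, the matrix $\psi(\alpha) = s^{-6k}\psi(\beta)$ is upper triangular with top-left entry exactly $1$. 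Hence $\psi(\alpha) - \mathrm{Id}_2$ is upper triangular with a zero in position $(1,1)$, so $|\psi(\alpha) - \mathrm{Id}_2| = 0$ and Theorem \ref{ConwayBurau} gives $\nabla(\widehat{\alpha}) = 0$.

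Feeding $\nabla(\widehat{\alpha}) = 0$ and $r = k$ into Corollary \ref{Corolario1} then finishes the computation: for $k$ even we get $\nabla(\widehat{\beta}) = \nabla(\widehat{\alpha}) = 0$, and for $k$ odd we get the Fibonacci sum asserted in the statement. It remains to verify positivity of the coefficients. The even case is trivial. For $k$ odd, each index appearing in the sum is odd, so the extension rule $F_{-n} = (-1)^{n+1}F_n$ collapses to $F_{n} = F_{|n|}$ on exactly these indices; since the ordinary Fibonacci polynomials $F_m$ with $m \geq 0$ have non-negative coefficients (immediate by induction from $F_m = z\,F_{m-1} + F_{m-2}$), every summand has non-negative coefficients, and multiplication by $2z$ preserves this.

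The only genuine work is the matrix computation of the middle paragraph; the crucial and slightly delicate point I expect to be the main obstacle is confirming that the $(1,1)$ diagonal entry of $\psi(\sigma_1\sigma_2\sigma_{13})$ is precisely $s^6$ (equivalently, that $1$ is an eigenvalue of $\psi(\alpha)$), because it is exactly this coincidence that forces $\nabla(\widehat{\alpha})$ to vanish and lets Corollary \ref{Corolario1} apply without a residual term. Once that is established, the two parity cases and the positivity argument are entirely routine.
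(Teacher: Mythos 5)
Your proposal is correct, and its skeleton is the same as the paper's: identify $\beta=(\sigma_1\sigma_2\sigma_{13})^k$ as $\Delta^{2k}\alpha$ with $e_\alpha=-3k$, invoke Corollary \ref{Corolario1} with $r=k$, and check that the odd-index Fibonacci polynomials have non-negative coefficients. The one genuinely different ingredient is how you show $\nabla(\widehat{\alpha})=0$. The paper uses the word identity $\sigma_1\sigma_2\sigma_{13}=\Delta^2\sigma_2^{-3}$, which simultaneously exhibits $\alpha=\sigma_2^{-3k}$ explicitly and makes the vanishing immediate, since the closure of $\sigma_2^{-3k}$ in $\mathbb{B}_3$ is a split link (the first strand splits off). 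You instead take $\alpha:=\Delta^{-2k}\beta$ formally and compute the reduced Burau matrix $\psi(\sigma_1\sigma_2\sigma_{13})=B_1B_2B_1^{-1}B_2B_1=\left(\begin{smallmatrix} s^6 & s^2-s^4+s^6\\ 0 & -1\end{smallmatrix}\right)$, so that $\psi(\alpha)$ has eigenvalue $1$ and $|\psi(\alpha)-\mathrm{Id}_2|=0$; I verified this matrix and your argument goes through. Your route is a little more computational but requires no cleverness in recognizing the word identity; the paper's is shorter and also explains topologically why the even case vanishes. One remark for both you and the paper: the displayed formula in the statement reads $F_{-3k+6i-4}$, whereas Corollary \ref{Corolario1} (and both your derivation and the paper's own proof) produce $F_{-3k+6i+4}$; the $-4$ in the statement is evidently a typo, and your proof establishes the correct $+4$ version.
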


\begin{proof}

It is easy to check that $\sigma_1\sigma_2\sigma_{13} = \Delta^2 (\sigma_2^{-1})^3$, so $(\sigma_1\sigma_2\sigma_{13})^k = \Delta^{2k} (\sigma_2^{-1})^{3k}$, since $\Delta^2$ is central. Now apply Corollary \ref{Corolario1}, with $\alpha = (\sigma_2^{-1})^{3k}$ (hence $e_\alpha = -3k$) and $\beta = \Delta^{2k} (\sigma_2^{-1})^{3k} = (\sigma_1\sigma_2\sigma_{13})^k$.

If $k$ is even, then $\nabla((\widehat{\sigma_1\sigma_2\sigma_{13}})^{k}) = \nabla(\widehat{(\sigma_2^{-1})^{3k}}) = 0$, since the closure of $(\sigma_2^{-1})^{3k}$ in $\mathbb{B}_3$ is a split link.

If $k$ is odd, then $\nabla((\widehat{\sigma_1\sigma_2\sigma_{13}})^{k}) = 2z \sum_{i=0}^{k-1} F_{-3k + 6i + 4}$. As the Fibonacci polynomials in the summation have odd subindices, all their coefficients are positive.
\end{proof}

This completes the proof of Theorem \ref{main}.


\section{Trying to extend the result}

In this section we consider the problem of extending the previous result to a higher number of strands, that is, we study whether every strongly quasipositive link has positive Conway polynomial. The following result gives a negative answer to this question by showing a counterexample:

\begin{Prop}\label{counterex}
There are strongly quasipositive links having non-positive Conway polynomial.
\end{Prop}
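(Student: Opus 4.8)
The plan is to prove the proposition by exhibiting a single explicit counterexample: a strongly quasipositive braid on $n \geq 4$ strands whose closure has a Conway polynomial with at least one strictly negative coefficient. Since a braid is strongly quasipositive exactly when it is the closure of a BKL-positive braid, it suffices to produce a word $\beta$ written entirely in band generators $\sigma_{ij}$ with positive exponents and then to verify that $\nabla(\widehat{\beta})$ is not positive. Because Theorem \ref{main} already settles the three-strand case, the smallest candidates live in $\mathbb{B}_4$, so I would search for $\beta$ there first.

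To compute the Conway polynomial I would apply Theorem \ref{ConwayBurau} with $n = 4$. Concretely, I would translate each band generator into Artin generators through the formula $\sigma_{ij} = (\sigma_{j-2}\cdots\sigma_i)^{-1}\sigma_{j-1}(\sigma_{j-2}\cdots\sigma_i)$, evaluate the reduced Burau representation by multiplying the corresponding matrices $A_1, A_2, A_3 \in GL(3, \mathbb{Z}[s,s^{-1}])$, and form the $3 \times 3$ determinant $|\psi(\beta) - I_3|$. Multiplying by $(-1)^{5}\, s^{-e_\beta}/[4]$ and performing the substitution $s^{-1} - s = z$ (using Lemma \ref{Koseleff} to rewrite the symmetric Laurent monomials $(s^{-1})^m + (-s)^m$ as sums of Fibonacci polynomials) produces $\nabla(\widehat{\beta})$ as an explicit polynomial in $z$. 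Here $e_\beta$ is simply the number of band letters in the word, since each $\sigma_{ij}$ has exponent sum $1$ under the map $\mathbb{B}_4 \to \mathbb{Z}$. Reading off the coefficients, I would then point to one that is negative.

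The genuine difficulty is not the determinant computation, which is finite and mechanical once $\beta$ is fixed, but the search for a braid that is actually a counterexample. Strongly quasipositive links are boundaries of quasipositive surfaces and their invariants tend to behave positively, so a word exhibiting a negative Conway coefficient must be found by a guided search rather than by chance. I would scan short BKL-positive words in $\mathbb{B}_4$, giving priority to those that genuinely use the generators $\sigma_{13}, \sigma_{14}, \sigma_{24}$ (since words confined to a band-subgroup conjugate into $\mathbb{B}_3$ and are already governed by Theorem \ref{main}), compute $\nabla$ for each, and retain the first with a negative coefficient. Having isolated such a $\beta$, the write-up reduces to displaying the BKL-positive word, recording $\psi(\beta)$ and the value of $|\psi(\beta) - I_3|$, and exhibiting the resulting Conway polynomial together with its offending negative coefficient; one may additionally identify $\widehat{\beta}$ in a knot table as an independent check.
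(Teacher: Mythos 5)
Your strategy is the right one---this is an existence statement, the paper proves it exactly as you propose, by exhibiting a single explicit BKL-positive braid and computing the Conway polynomial of its closure---but your write-up stops short of actually producing the witness, and for a proof by explicit counterexample the witness \emph{is} the proof. Everything you describe (translating band generators to Artin generators, multiplying reduced Burau matrices, forming $|\psi(\beta)-I_{n-1}|$, normalizing by $(-1)^{n+1}s^{-e_\beta}/[n]$ and substituting $s^{-1}-s=z$) is a correct and finite verification procedure, but a description of a search that ``would'' be carried out, together with the instruction to ``retain the first word with a negative coefficient,'' does not establish that such a word exists. Until a concrete $\beta$ is named and its polynomial displayed, the argument is a plan rather than a proof.

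Two further points of comparison with the paper. First, the paper's example lives in $\mathbb{B}_6$, not $\mathbb{B}_4$: it is $\alpha = \sigma_{16}\sigma_{16}\sigma_{46}\sigma_{35}\sigma_{24}\sigma_{13}\sigma_{25}$, with $\nabla(\widehat{\alpha}) = -z^2+1$. Your heuristic that the smallest candidates should live in $\mathbb{B}_4$ is reasonable but unproven; if no $4$-strand example exists your search there would never terminate successfully, so you should at least say explicitly that the search ranges over all $n\geq 4$ and all word lengths. Second, the paper does not verify the polynomial by hand via Burau but cites a computation with the Short--Morton program; your Burau-based verification is arguably more self-contained and checkable, and combined with the paper's explicit braid it would make a complete proof. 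As written, though, the missing ingredient is precisely the one piece of content the proposition requires: the braid itself.
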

\begin{proof}
Consider the BKL-positive braid on 6 strands $\alpha = \sigma_{16}\sigma_{16}\sigma_{46}\sigma_{35}\sigma_{24}\sigma_{13}\sigma_{25}$. Its closure is a strongly quasipositive link, whose Conway polynomial is $\nabla(\widehat{\alpha}) = -z^2 + 1$, which is non-positive. (We computed $\nabla(\widehat{\alpha})$ by using a C++ version of the program br9z.p, developed by Short and Morton in 1985: \, \texttt{http://www.liv.ac.uk/~$\sim$~su14/knotprogs.html}).
\end{proof}

In \cite{deltamove}, in the proof of Corollary 88 Rudolph stated that every Seifert matrix of a given link can be obtained as the Seifert matrix of a strongly quasipositive link. As a consequence, given a link $L$ with Conway polynomial $\nabla(L)$, there would exist a strongly quasipositive link $L'$ having the same Conway polynomial, that is, verifying $\nabla(L) = \nabla(L')$. He also gives a procedure for constructing $L'$ as a closed BKL-positive braid, starting from a braid diagram of $L$. This result would provide an infinite family of examples of strongly quasipositive links having non-positive Conway polynomial. However, we think that there is a problem with the proof of this result: it is claimed that the procedure for obtaining $L'$ starting from $L$ (a sequence of \emph{doubled-delta moves}, also called \emph{trefoil insertion}) preserves the Seifert matrix. After applying this move to the braid $\beta = \sigma_1\sigma_1\sigma_1^{-1} \in \mathbb{B}_2$, one obtains $\beta' = \sigma_{16}\sigma_{16}\sigma_{25}\sigma_{13}\sigma_{24}\sigma_{35}\sigma_{46}$. The closure of $\beta$ is the trivial knot, hence $\nabla(\widehat{\beta}) = 1$; however $\nabla(\widehat{\beta'}) = 7z^2 + 1$. This contradicts the fact that doubled-delta moves preserve the Seifert matrix.


\section{Appendix}

In this section we show that the families of Artin positive, positive and strongly quasipositive links are not equivalent, even in the case of links with braid index 3. We give two examples: the first one is a positive knot which is not Artin-positive, and the second one a strongly quasipositive link which is not positive.

\begin{Prop}\label{homnobraidhom}
There are positive links which are not Artin-positive.
\end{Prop}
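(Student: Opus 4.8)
The plan is to exhibit a single knot that is visibly positive yet provably not the closure of any Artin-positive braid, using fiberedness as the separating property. I would take the twist knot $5_2$, which is already displayed with a positive diagram in Figure \ref{posArtinBKL}; hence its positivity requires no further argument, and it only remains to rule out any Artin-positive braid representative.

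The key structural input is that the closure of an Artin-positive braid is a fibered link: applying Seifert's algorithm to a positive braid word produces the fiber surface directly (a classical theorem of Stallings). Consequently every Artin-positive knot is fibered, and a fibered knot has monic Alexander polynomial. Phrased in terms of the invariant used throughout this paper, the top-degree coefficient of the Conway polynomial of a fibered knot must be $\pm 1$, since that coefficient coincides with the leading coefficient of the Alexander polynomial under $z^2 = t - 2 + t^{-1}$ (a relation independent of the sign convention for $z$).

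To conclude I would record that $\nabla(5_2) = 2z^2 + 1$, computable for instance from Theorem \ref{ConwayBurau} or read off from the Alexander polynomial $2t - 3 + 2t^{-1}$. Its leading coefficient is $2 \neq \pm 1$, so $5_2$ is not fibered and therefore not the closure of any positive braid, i.e.\ not Artin-positive, even though it is positive. This establishes the proposition. The computation is entirely routine, and the real content lies in the choice of invariant: positivity of a \emph{diagram} is strictly weaker than positivity of a \emph{braid word}, and fiberedness is precisely what separates them here, since an Artin-positive link carries a canonical fiber surface while a positive link need not. The only mild subtlety — and thus the main point to get right — is the passage from ``fibered'' to a statement about the leading coefficient of $\nabla$; any positive knot with non-monic Alexander polynomial would serve equally well, but $5_2$ is the smallest such example and is already at hand in the paper.
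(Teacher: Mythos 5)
Your proof is correct, but it takes a genuinely different route from the paper. You separate the two classes by \emph{fiberedness}: Stallings' theorem gives that closures of Artin-positive braids are fibered, fibered knots have monic Alexander polynomial, and $\nabla(5_2) = 2z^2+1$ has leading coefficient $2 \neq \pm 1$, so $5_2$ cannot be Artin-positive even though it has a positive diagram. The paper instead argues by genus and crossing counts: assuming $5_2 = \widehat{\gamma}$ for a positive Artin word of minimal length, it invokes Cromwell's result that homogeneous (hence positive) diagrams realize the genus, deduces $s+1=c$ for the Seifert data of the braid diagram, and then derives a combinatorial contradiction (a generator appearing exactly once forces a nugatory crossing, impossible for a prime knot and a minimal word). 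Your argument is shorter and reduces the whole question to one polynomial computation plus a classical theorem, and it generalizes immediately to any positive knot with non-monic Alexander polynomial; the paper's argument avoids fiberedness altogether and stays within the genus/diagram framework already set up for the rest of the appendix. One small point of care in your write-up: the identification of the leading coefficient of $\nabla$ with that of the Alexander polynomial under $z^2 = t - 2 + t^{-1}$ is indeed standard and sign-independent, so the step from ``non-monic $\Delta$'' to ``leading coefficient of $\nabla$ not $\pm 1$'' is fine; just make sure to cite Stallings (or an equivalent source) for fiberedness of positive braid closures, since the paper itself never introduces that fact.
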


\begin{proof}
In Figure \ref{posArtinBKL} it is shown a positive diagram of the knot $5_2$; its braid index is 3, as it is the closure of the braid $\alpha = \sigma_2^{-3}\sigma_1^{-1}\sigma_2\sigma_1^{-1}$. Suppose now that $5_2$ is the closure of a braid $\gamma$ represented by a positive Artin-word $w$. We take $w$ with minimal length. Let $D$ be the associated positive diagram. As projection surfaces constructed from homogeneous diagrams have minimal genus \cite{CromwellHom} and positive diagrams are homogeneous, $g(S_D) = g(5_2) =1$ leads to $s + 1 = c$, where $s$ and $c$ are the number of Seifert discs and bands in $S_D$, the projection surface arising from $D$. Notice that $s$ is the number of strands and $c$ is the number of crossings of $\gamma$.

Since $c \geq 5$, $\gamma$ must have at least 4 strands, and then some generator $\sigma_i$ must appear at most once. All generators must appear, since $5_2$ is a knot (a one-component link), so there exists one generator appearing exactly once, and this is a nugatory crossing. This is a contradiction with the minimality of $w$ since $5_2$ is prime.
\end{proof}

\begin{Prop}
Links in the family $\{(\widehat{\sigma_1\sigma_2\sigma_{13}})^{k}, \, \, k \mbox{ even}\}$ are non-positive but strongly quasipositive.
\end{Prop}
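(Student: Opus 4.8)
The plan is to treat the two assertions separately, the first being immediate and the second carrying all the content. Strong quasipositivity is free: each braid $(\sigma_1\sigma_2\sigma_{13})^k$ is by construction a word in the band generators $\sigma_1=\sigma_{12}$, $\sigma_2=\sigma_{23}$ and $\sigma_{13}$ with all exponents positive, so it is BKL-positive and its closure is strongly quasipositive. The real task is to prove non-positivity. I would stress at the outset that the vanishing $\nabla=0$ obtained in Corollary \ref{Cor123} does \emph{not} by itself obstruct positivity: the zero polynomial has all coefficients non-negative, so it is perfectly compatible with Theorem \ref{main} and with the positive-Conway-polynomial property of positive links. A finer invariant is therefore needed.

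The invariant I would use is the collection of pairwise linking numbers, which is decisive here. Using the identity $\sigma_1\sigma_2\sigma_{13}=\Delta^2\sigma_2^{-3}$ from the proof of Corollary \ref{Cor123} and the centrality of $\Delta^2$, write the braid as $\beta=\Delta^{2k}\sigma_2^{-3k}$. For $k$ even its underlying permutation is trivial, so $\widehat\beta$ has three components, indexed by the three strand positions. Now I compute the three linking numbers. Each full twist $\Delta^2$ adds two positive crossings to every pair of strands, hence contributes $+1$ to each pairwise linking number; thus $\Delta^{2k}$ alone would give $l_{12}=l_{13}=l_{23}=k$. The remaining factor $\sigma_2^{-3k}$ involves only the two strands in positions $2$ and $3$ (which are exactly components $2$ and $3$, since $\Delta^{2k}$ is a pure braid), and it introduces $3k$ negative crossings between them, lowering $l_{23}$ by $3k/2$. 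Hence
\[
l_{12}=l_{13}=k, \qquad l_{23}=k-\tfrac{3k}{2}=-\tfrac{k}{2}<0 .
\]

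With a strictly negative linking number in hand, non-positivity follows. If $\widehat\beta$ were a positive link, it would admit a diagram all of whose crossings are positive, and then every pairwise linking number computed in that orientation would be non-negative. To handle the fact that positivity allows one to choose the orientation, I would use that the triple product $l_{12}\,l_{13}\,l_{23}$ is invariant under reversing the orientation of any single component, since such a reversal flips the sign of exactly two of the three factors. Consequently the triple product is the same for every orientation of $\widehat\beta$, and here it equals $-k^3/2<0$. This contradicts the non-negativity forced by any positive diagram, so no orientation of $\widehat\beta$ is positive. Together with strong quasipositivity this proves the proposition.

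The step I expect to be the main obstacle is not the logic but the bookkeeping in the linking-number computation, together with the orientation subtlety. One must be sure that the $3k$ crossings of $\sigma_2^{-3k}$ really are crossings between components $2$ and $3$ (which is why it matters that $\Delta^{2k}$ is a pure braid and that $k$ is even, so that $\sigma_2^{-3k}$ closes up without permuting these components), and one must argue reorientation-invariance of the triple product rather than merely citing the single negative linking number, since the definition of a positive link permits a change of orientation. As a reassurance that the numbers are right, I would note the consistency check that the coefficient of $z^{2}$ in $\nabla(\widehat\beta)$ equals $l_{12}l_{13}+l_{12}l_{23}+l_{13}l_{23}=k^2-\tfrac{k^2}{2}-\tfrac{k^2}{2}=0$, in agreement with $\nabla(\widehat\beta)=0$.
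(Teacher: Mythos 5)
Your proof is correct, and it takes a genuinely different route from the one in the paper. The paper argues via genus: it invokes Baader's theorem that a strongly quasipositive link is positive if and only if it is homogeneous, Cromwell's formula $2g(L) = \mathrm{maxdeg}\,\nabla(L) - \mu(L) + 1$ for homogeneous links, and Rudolph's result that the quasipositive surface realizes the minimal genus; since that surface has $3$ discs and $3k$ bands while $\nabla(L)=0$ (Corollary \ref{Cor123}), the degree formula fails and $L$ cannot be positive. You instead compute the pairwise linking numbers from the normal form $\Delta^{2k}\sigma_2^{-3k}$, obtaining $l_{12}=l_{13}=k$ and $l_{23}=-k/2$, and observe that a positive diagram forces all pairwise linking numbers to be non-negative; your use of the reorientation-invariant product $l_{12}l_{13}l_{23}=-k^3/2<0$ cleanly disposes of the orientation ambiguity, and your consistency check against the $z^{2}$-coefficient of $\nabla$ (which indeed vanishes, matching $\nabla=0$) confirms the bookkeeping. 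Your argument is considerably more elementary, needing none of the three cited theorems, and you are right that $\nabla=0$ alone is no obstruction to positivity; the trade-off is that a linking-number obstruction is special to multi-component links with a genuinely negative pairwise linking number, whereas the paper's genus-defect argument is the one that fits the surrounding machinery (quasipositive surfaces, Conway degree) and would still apply in situations where all linking numbers happen to be non-negative. Both proofs are valid; yours is self-contained and arguably more transparent for this particular family.
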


\begin{proof}
From the definition, it is immediate to check that links of the form $(\widehat{\sigma_1\sigma_2\sigma_{13}})^{k}$ are strongly quasipositive. The proof of their non-positivity when $k$ is even lies in a couple of additional results.

The first one is a result by Baader \cite{Baader} which states that a strongly quasipositive link is positive if and only if it is homogeneous. Peter Cromwell proved in \cite{CromwellHom} that, given an homogeneous link $L$ of $\mu(L)$ components, its Conway polynomial is related with its genus by the formula $2 g(L) = \mbox{maxdeg }(\nabla(L)) -\mu(L) + 1$.

Let $L$ be a link in the family $(\widehat{\sigma_1\sigma_2\sigma_{13}})^{k}$, and suppose that $L$ is positive. Since links in this family have 3 components, as a consequence of the results above $L$ should verify $2 g(L) = \mbox{maxdeg }(\nabla(L)) - 2$.

Let $S$ be the quasipositive surface associated to the word in the form $(\sigma_1\sigma_2\sigma_{13})^k$ representing $L$ (see Figure \ref{quasipossurf}). The Euler characteristic of this kind of surfaces can be computed as $\chi(S) = d_S - b_S$, with $b_S$ and $d_S$ being the number of bands and discs in $S$ respectively; as a consequence, $2 g(S) = 2 - \mu(L) + b_S - d_S$.

Rudolph proved in \cite{Rudolph1} that quasipositive surfaces have minimal genus for the link they are spanning. Hence, $L$ should verify $\mbox{maxdeg }(\nabla(L)) = 1 + b_S - d_S$. As we are working in $\mathbb{B}_3$, there are 3 discs in $S$, so $\mbox{maxdeg }(\nabla(L)) + 2 = b_S$.

Hence, if $L$ were positive the number of bands in $S$ and its Conway polynomial should be related in the way above. We proved in Corollary \ref{Cor123} that $\nabla(L) = 0$. However, the number of bands in $S$ equals $3k$, yielding a contradiction.

\end{proof}

\vspace{0.2cm}

\noindent \textbf{Funding:} This study was partially funded by de Spanish Ministry of Economy and Competitiveness and FEDER (Projects MTM2010-19355 and MTM2013-44233-P), and by the Regional Government of Andalousia (Project P09-FQM-5112). \\

\noindent \textbf{Conflict of Interest:} The author declares that she has no conflict of interest.

\bibliographystyle{plain}
\bibliography{Bibliograf}

\end{document}